\numberwithin{equation}{section}
\newtheorem{theorem}{Theorem}[section]
\newtheorem{lemma}{Lemma}[section]
\newtheorem{remark}{Remark}[section]
\newtheorem{conjecture}{Conjecture}
\providecommand{\abs}[1]{\lvert #1\rvert}
\newcounter{neweqn}
\newcommand{\beq}[1]{\addtocounter{neweqn}{1}\begin{equation}\label{#1}}
\newcommand{\eeq}{\end{equation}}
\newcommand{\nc}{\newcommand}
\newcommand{\beray}[1]{\addtocounter{neweqn}{1}\begin{eqnarray}  \label{#1}}
\nc{\vb}{\mathbf{v}}
\nc{\bu}{\mathbf{u}}
\nc{\bv}{\mathbf{v}}
\nc{\bq}{\mathbf{q}}
\nc{\bd}{\mathbf{d}}
\nc{\bb}{\mathbf{b}}
\nc{\bc}{\mathbf{c}}
\nc{\bi}{\mathbf{i}}
\nc{\bfr}{\mathbf{r}}
\nc{\bP}{\mathbf{P}}
\nc{\bQ}{\mathbf{Q}}
\nc{\bbC}{\mathbb C}
\nc{\D}{\mathbb D}
\nc{\F}{\mathbf F}
\nc{\bbS}{\mathbb S}
\nc{\bE}{\mathbf E}
\nc{\br}{\bigr}
\nc{\bl}{\bigl}
\nc{\Bl}{\Bigl}
\nc{\Br}{\Bigr}
\nc{\ind}[1]{\,\mathbf{1}_{\{#1\}}\,
}
\newcommand{\by}{{\bf y}}
\newcommand{\bx}{{\bf x}}
\newcommand{\R}{{\mathbb{R}}}
\renewcommand{\P}{\mathsf{P}}
\newcommand{\Z}{\mathbb{Z}}
\newcommand{\N}{\mathbb{N}}
\newcommand{\eps}{\varepsilon}
\newcommand{\E}{\mathsf{E}}
\newcommand{\A}{{\bf A}}
\newcommand{\be}{{\bf e}}
\newcommand{\Xn}{X^{n}}
\newcommand{\Qn}{Q^{n}}
\newcommand{\Mn}{M^{n}}
\begin{document}

\title{A diffusion limit for Markov chains with
 log-linear interaction on a graph }

\author{
Anatolii Puhalskii\footnote{Institute for Problems in Information Transmission, Moscow, Russia.
 Email address: puhalski@iitp.ru
 }\, and 
Vadim Shcherbakov\footnote{
 Royal Holloway University of London, Egham, UK.
 Email address: vadim.shcherbakov@rhul.ac.uk
}
}

\date{}
\maketitle


\begin{abstract}
{\small
In this paper we establish a  diffusion limit  for  
a multivariate continuous time   Markov chain 
 whose 
components  are indexed
by vertices of a finite graph.
The components  take values in a common finite  set of non-negative integers 
and evolve subject to  a graph based log-linear  interaction.  
We show that if the set of common  values of the components expands to the set of all 
non-negative integers, then a time-scaled and normalised
version of the Markov chain converges to 
a system of interacting Ornstein-Uhlenbeck processes reflected at the origin.
This limit  is akin to heavy traffic limits 
in queueing (and our model can be naturally 
interpreted as a queueing model).  
 Our 
proof draws on developments  in queueing theory
    and relies on  martingale methods.

}
\end{abstract}

\noindent {{\bf Keywords:} interacting 
 Markov chains, diffusion approximation, reflected Ornstein-Uhlenbeck process, reversibility,  Skorohod problem}

\section{Introduction}
\label{intro}

This paper concerns a probabilistic model 
that is stated in terms of a multivariate 
continuous time Markov chain (CTMC), whose components  are 
indexed by vertices of a finite undirected graph 
and take values in a common finite set of non-negative integers.
Components   evolve subject to a nearest-neighbour interaction, where 
the neighbourhood relationship is induced by the underlying graph.
The CTMC can be interpreted as a queueing system with interaction and is also related to interacting
spin systems of statistical physics. 
 
We are interested in the asymptotic regime where
the set of possible values 
of the components  expands to the set of all non-negative integers, while 
the interaction weakens. 
We show  that a
time-scaled and normalised version of the chain converges in distribution to 
a continuous--path process  which can be 
 interpreted as a collection  of interacting Ornstein-Uhlenbeck processes with 
  reflection.
  It should be noted that the limit regime that we consider 
   is reminiscent of the  heavy traffic 
scaling  in  queueing theory. 
Our 
proof draws on developments  in queueing theory
    and relies on  martingale methods, widely used for studying
    queueing systems under  heavy traffic conditions.

\section{The model}
\label{model}

It is assumed that all random variables are defined on a common
probability space endowed with probability measure $\P$.
 Expectation with respect to $\P$ will be denoted by $\E$.
Let  $G=(V,E)$ be a finite undirected graph with the set of vertices $V$ and the set 
of edges $E$. 
If vertices $v$ and $u$ are adjacent, we call them neighbours and write $v\sim u$.
By definition, a vertex is not a neighbour of itself.
Let $\A=(a_{vu})$ represent
 the  adjacency matrix of the graph $G=(V,E)$, that is a symmetric matrix
such that $a_{vu}=a_{uv}=0$, if $u\nsim v$ and  $a_{vu}=a_{uv}=1$, if 
 $u\sim v$, where $v,u\in V$.

Given an integer $N\geq 1$, consider  a CTMC
 $$Q(t)=(Q_{v}(t), v\in V)\in S_{N, V}:=\{0,1,\ldots,N\}^{V}$$
  (i.e.  $Q_{v}(t)\in\{0,1,\ldots,N\}$)
and 
  with the transition rates 
$r(\bx,\by)$, $\bx,\by\in S_{N,V}$, given by 
 \begin{equation}
\label{rates1}
r(\bx,\by)=\begin{cases}
\lambda_v(\bx),& 
\mbox{ for }\by=\bx+\be_v \mbox{ and } \bx=
(x_u,\, u\in V): x_v<N,\\
1,& 
\mbox{ for } \by=\bx-\be_v \mbox{ and } \bx=
(x_u,\, u\in V): x_v>0,\\
0,& \text{otherwise},
\end{cases}
\end{equation}
where 
\begin{equation}
\label{lambda}
\lambda_v(\bx)=e^{\alpha x_v+\beta(\A\bx)_v}
=e^{\alpha x_v+\beta\sum\limits_{u: u\sim v}x_u},
\end{equation}
$\alpha$ and $\beta$  are given constants,   and
$\be_v\in \R^V$ is the vector, the  $v-$th coordinate of which 
is equal to $1$,  and all other coordinates are zeros.

The model is motivated by the study of interactions in multicomponent systems. In particular, it is designed to 
capture a common feature of many real-life phenomena: in the absence of interaction, each component follows a 
simple process, whereas the presence of interaction can significantly alter both individual and collective 
behaviours. The log-linear rates provide a flexible and technically convenient framework for modelling different 
types of interaction.
Indeed,  if $\beta=0$, then the CTMC $Q(t)$  is just 
 a collection of i.i.d.  one-dimensional birth-and-death  Markov chains
  $(Q_v(t),\, t\geq 0)$, $v\in V$, with exponential birth rates and unit death rates.
 Each of these processes  can be also regarded  as  a non-homogeneous simple 
random walk  on the set of integers $\{0,1,...,N\}$ 
with reflection at both $0$ and $N$. 
If $\beta\neq 0$, then  the CTMC $Q(t)$
can be interpreted as a system of the aforementioned one-dimensional 
processes evolving subject to the 
 interaction induced by the parameter $\beta$.  
 If $\beta>0$, then the interaction is cooperative
in the sense that positive components increase the birth rates of their neighbours.
Vice versa, if $\beta<0$, then  the interaction is competitive in the sense that 
components obstruct the growth of each other.
The CTMC $Q(t)$ is reversible, and its stationary distribution is available in closed form 
(see Section~\ref{stationary-dist}).

  This model  can be considered on an infinite graph with a bounded vertex degree (e.g. on the lattice) as well, in which case 
  it is related to interacting particle systems 
  such as the 
Richardson model (\cite{Richardson}) and the contact process (\cite{Liggett}).
  The zero death rate case
   is related  to a class of spatial growth models with nearest-neighbour interaction introduced
  in~\cite{Harris}.

A special case of the  model with $\alpha=\beta$ was introduced in~\cite{Yamb}.    
The model in its current form, with arbitrary $\alpha$ and $\beta$, 
is  a state-space constrained  version of the 
 countable CTMC (i.e. the case when ``$N=\infty$'')
 introduced in \cite{VS15} and later studied in greater detail
  in \cite{JVS}.
  The countable CTMC exhibits all possible modes of asymptotic
   behaviour of a countable CTMC: null or positive recurrence, and non-explosive or explosive transience, depending on the parameters $\alpha$ and $\beta$ and on the structure of the underlying graph. For more details, see \cite[Theorems 2.3 and 6.1]{JVS}. 
    Here we just briefly describe another notable feature of the countable CTMC, 
    namely the phase transition in the long-term behaviour of the process. In particular, for $\alpha<0$,
    define
    \begin{equation}
\label{beta}
\beta_{cr}:=-\alpha/\nu(G),
\end{equation}
 where $\nu(G)$ denotes  the principal 
eigenvalue of the graph $G$, i.e. the largest  eigenvalue of the adjacency matrix of the graph.
 Then the CTMC is
\begin{itemize}
\item positive recurrent, if $\beta<\beta_{cr}$,
\item    non-explosive transient, if   $\beta=\beta_{cr}$, and 
\item  explosive transient, if $\beta>\beta_{cr}$.
\end{itemize}

The CTMC $Q(t)$ is also related to models of interacting spins in statistical physics. 
For instance, when $N=1$, the stationary distribution of the Markov chain 
coincides with a special case of the Ising model on a finite graph
 (see Section~\ref{stationary-dist}). In statistical physics,
  the primary interest lies in the behaviour of such models as the
   underlying graph grows indefinitely, 
   particularly in relation to the occurrence of phase transitions.
We study  the model 
 in the asymptotic regime where the graph $G$ is fixed,  $N$ tends to infinity, and the appropriately scaled 
 interaction vanishes.
 
 More specifically, we consider a sequence of CTMC 
  $\Qn(t)=(\Qn_v(t),\, v\in V)$, $n\in \N=\{1,2,\ldots\}$,  with   transition rates 
  $r_n(\bx, \by)=r(\bx/n, \by/n)$,
$\bx, \by\in S_{N_n, V}$, 
where $r(\cdot, \cdot)$ are the transition rates defined
in~\eqref{rates1}, so that $\alpha$ and $\beta$ are 
effectively
replaced with $\alpha/n$ and $\beta/n$, respectively.
We  assume that $N=N_n\to \infty$ in such a way that
$N_n/\sqrt{n}\to\infty$, while $N_n/n\to 0$, as $n\to \infty$, and
  show that 
   the process $(Q^n(nt)/\sqrt{n}),\,  t\in\R_+)$ converges in
   distribution to a multivariate continuous path process.
    This  limit  process 
 is a collection of interacting Ornstein-Uhlenbeck 
  (OU) processes reflected at the origin.

A key insight is to view the  CTMC $Q^n(t)=(Q^n_v(t),\, v\in V)$
  as a collection of probabilistically independent
single--server exponential queues with finite buffers.
  The queues are associated
with the vertices,  the component  $Q_v^n(t)$ representing the queue length at vertex $v$.
 The   transitions $Q^n_v\to Q^n_v+1$ correspond to customer arrivals
and the  transitions $Q^n_v\to Q^n_v-1$ correspond to customer departures.
The arrival rates are equal to $\lambda_v(Q^n(t))$\, so that they
 depend on the states of
the other queues, whereas the service rates are equal to $1$. Since
$\lambda_v( 0) =1 $,  the arrival and nominal service
rates match when there are no customers present, so,
the queues are  critically loaded, where 
$ 0$ represents the origin of  $\R_+^V$.
 This observation enables us to apply  techniques developed in \cite{Lee} to obtain
      diffusion approximation
results for critically loaded queueing
networks with state dependent rates.
In addition,
  the martingale methods developed for the study of
 exponential queueing networks are brought to bear on the present setup
\cite{Krichagina,PangTalrejaWhitt}.

Here's how this paper is organised. In Section~\ref{main}
we  recall the basics of the Skorohod reflection mapping, 
formally define the limit process and state the main result. The proof of the main result is given 
in Section~\ref{proof}. In Section~\ref{stationary-dist} we discuss  the model's stationary distribution 
 and its diffusion limit.  Finally, in Section~\ref{problem} we 
 state an open problem  concerning the long term behaviour of 
the limit process.

\section{Skorohod reflection and the main result}
\label{main}

Reviewing  the  basics of the Skorohod reflection mapping on
$\R_+$ is in order. 
Given a real valued rightcontinuous function 
$\psi=(\psi(t), \, t\geq 0)$
with lefthand limits such that $\psi(0)\ge0$,
there exists a uniquely specified real--valued rightcontinuous 
function $\Gamma(\psi)=(\Gamma(\psi)(t),\, t\ge0)$ with lefthand limits  
such that $\Gamma(\psi)(t)\ge0$, the function
$\phi=\Gamma(\psi)-\psi$
 is nondecreasing,  and 
$$\phi(t)=\int_0^t{\bf 1}_{\{\Gamma(\psi)(s)=0\}}\,d\phi(s).$$
This result for the case of  $\psi$ being  continuous  was first
obtained in~\cite{Skorohod}, see also \cite{ike}.  The proof in \cite{ike} also
applies when $\psi$ is rightcontinuous with lefthand limits.
We will say that the pair $(\Gamma(\psi),\phi)$ is a solution  of
 the Skorohod reflection problem on
$\R_+ $ associated with $\psi$.
Moreover, the following explicit representation holds (e.g. see~\cite{ike})
\begin{equation}
\label{Gamma-def}
\Gamma(\psi)(t)=\psi(t)- 0\wedge \inf\limits_{s\in[0, t]}\psi(s).
\end{equation}
It follows that the map $\psi\to\Gamma(\psi)$ from
$\D(\R_+,\R)$ to $\D(\R_+,\R)$ is
Lipschitz continuous for
the locally uniform metric.
 The following majorisation property is useful.
The proof is a direct
consequence of \eqref{Gamma-def}.
\begin{lemma}
  \label{le:maj}
Suppose that $\psi_1=(\psi_1(t),\,  t\ge0)$ and
$\psi_2=(\psi_2(t),\, t\ge0)$ are rightcontinuous functions with
lefthand limits such that $\psi_1(0)\ge\psi_2(0)\ge0$. If $\psi_1$
strongly majorises $\psi_2$ in the sense that the function 
$(\psi_1(t)-\psi_2(t),\,t\ge0)$ is nondecreasing, then 
$\Gamma(\psi_1)(t)\ge \Gamma(\psi_2)(t),\, t\ge0$.
\end{lemma}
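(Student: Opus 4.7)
The plan is to argue directly from the explicit representation~\eqref{Gamma-def}. Introduce the regulator $\phi_i(t) := -\bl(0 \wedge \inf_{s\in[0,t]}\psi_i(s)\br)$ for $i=1,2$, so that $\Gamma(\psi_i)(t) = \psi_i(t) + \phi_i(t)$ with $\phi_i(t)\ge 0$. The claim is then equivalent to
\[
\phi_2(t) - \phi_1(t) \le \psi_1(t) - \psi_2(t) =: f(t), \qquad t\ge 0,
\]
and the hypotheses give that $f$ is nondecreasing with $f(0)\ge 0$, hence $f(s)\ge 0$ for all $s\ge 0$.

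The main estimate I would establish first is
\[
\inf_{s\in[0,t]}\psi_1(s) - \inf_{s\in[0,t]}\psi_2(s) \le f(t).
\]
This follows by writing $\psi_2(s) = \psi_1(s) - f(s) \ge \psi_1(s) - f(t)$ for $s\in[0,t]$, where monotonicity of $f$ is used in the last inequality, and then taking the infimum over $s\in[0,t]$. Combined with $\inf_{s\in[0,t]}\psi_1(s) \ge \inf_{s\in[0,t]}\psi_2(s)$, which holds because $\psi_1\ge\psi_2$, this controls the difference of the two infima on both sides.

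To pass from the infima to $\phi_2-\phi_1$, I would do a short case split on the sign of $\inf_{s\in[0,t]}\psi_1(s)$. If $\inf_{s\in[0,t]}\psi_1(s)\le 0$, then also $\inf_{s\in[0,t]}\psi_2(s)\le 0$, the clips at zero are inactive, and the above estimate gives exactly $\phi_2(t)-\phi_1(t)\le f(t)$. If $\inf_{s\in[0,t]}\psi_1(s)>0$, then $\phi_1(t)=0$; if additionally $\inf_{s\in[0,t]}\psi_2(s)\ge 0$, then $\phi_2(t)=0$ and the inequality is trivial, while if $\inf_{s\in[0,t]}\psi_2(s)<0$, then $\phi_2(t) = -\inf_{s\in[0,t]}\psi_2(s) \le \inf_{s\in[0,t]}\psi_1(s) - \inf_{s\in[0,t]}\psi_2(s) \le f(t)$. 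The case $\inf\psi_1<0\le \inf\psi_2$ is ruled out by $\psi_1\ge\psi_2$.

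There is no genuine obstacle here: the author flags the lemma as immediate from~\eqref{Gamma-def}, and indeed the only delicate point is the clipping at zero in the definition of $\phi_i$, which is dispatched by the brief case analysis above.
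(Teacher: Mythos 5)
Your proof is correct and follows exactly the route the paper intends: the authors state that the lemma ``is a direct consequence of \eqref{Gamma-def}'' without writing out the details, and your argument is a complete and accurate elaboration of that one-line claim, with the clipping at zero handled correctly by the case split on the signs of the two infima.
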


Let $B=(B_v,\, v\in V)$, where $B_v=(B_v(t),\, t\geq 0)$, 
represent  a collection of
 independent one-dimensional standard Brownian motions
 indexed by vertices of the graph $G=(V,E)$.
Let  $X_v=(X_v(t),t\ge0),\,  v\in V,$ be  
continuous path nonnegative processes that follow the equations
\begin{align}
  \label{eq:20'}
dX_v(t)&=\big(\alpha X_v(t)+\beta(\A X(t))_v\big)\,dt + 
\sqrt{2}\,dB_v(t) +d\phi_{v}(t),\quad v\in V,
\end{align}
with some initial conditions $X_v(0)\geq 0$, where   
$\phi_v=(\phi_v(t),\, t\geq 0),\, v\in V,$ are  nondecreasing 
continuous path processes such that 
    \begin{align*}
  \phi_v(t)=
\int_{0}^{t}{\bf 1}_{\{X_v(s)=0\}}d\phi_v(s)
\end{align*}
and $X(t)=(X_v(t),\,v\in V)$\,.
Thus, 
$X_v=\Gamma(Y_v),\, v\in V,  $
 where 
$Y_v=(Y_v(t),\, t\ge0)$, $Y_v(0)=X_v(0)$, and 
\begin{equation}
\label{Yv}
dY_v(t)=\big(\alpha X_v(t)+\beta(\A X(t))_v\big)\,dt + \sqrt{2}\,dB_v(t),\quad v\in V\,.
\end{equation}
The distribution of $X=(X_v,\, v\in V)$ solves a diffusion martingale problem
on normal  reflection in $\R_+^{V}$,\,  cf.  \cite{AnuLip}. 
The results of   \cite{AnuLip}  imply that the process $X$ is well defined.

Let $\mathbb D(\mathbb R_+,\mathbb R^{V})$ represent the Skorohod
space  of $\mathbb R^{V}$--valued  rightcontinuous functions on $\mathbb
R_+$ with lefthand limits. It is endowed with the Skorohod--Lindvall metric, 
which renders
$\mathbb D(\mathbb R_+,\mathbb R^{V})$  a complete separable metric space, see, e.g.,
\cite{jacshir}. 

Recall that
  $\Qn(t)=(\Qn_v(t),\, v\in V)$  is the CTMC 
   with   transition rates $r_n(\bx, \by)=r(\bx/n, \by/n)$,
$\bx, \by\in S_{N_n, V}$, 
where $r(\cdot, \cdot)$ are the transition rates defined
in~\eqref{rates1}.
The process $\Qn=(\Qn(t),t\ge0)$  is regarded as a random element of $\mathbb D(\mathbb R_+,\mathbb
R^{{V}})$ equipped with the Borel $\sigma$-algebra.
Let  
\begin{equation}
\label{Xn}
X^n_v(t)=\Qn_v(nt)/\sqrt{n}\,,\;\;
X^n(t)=\Qn(nt)/\sqrt{n}\quad\text{and}\quad X^n=(X^n(t),\, t\ge0).
\end{equation}
Theorem~\ref{the:1} below is the main result of the paper.

\begin{theorem}
\label{the:1}
Suppose that  $N_n\to \infty$, $N_n/\sqrt{n}\to\infty$ and $N_n/n\to 0$, as $n\to \infty$.
Suppose also  that  the initial condition $X^n(0)$ is deterministic, and 
 $X^n(0)\to X(0)=(X_v(0), \, v\in V)\in\R_+^{V}$ component-wise, as $n\to \infty$.
Then the  process
 $X^n$ converges in distribution  in 
$\mathbb D(\mathbb \R_+,\mathbb R^{{V}})$ to the
process $X=(X(t),\, t\geq 0)$.
\end{theorem}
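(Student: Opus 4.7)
The plan is to follow the classical martingale--Skorohod route developed for diffusion limits of critically loaded queueing networks, along the lines of \cite{Lee, Krichagina, PangTalrejaWhitt}. The key observation is that after the time change $t \mapsto nt$ and the rescaling by $1/\sqrt n$, the arrival rate is $\lambda_v(Q^n(ns)/n) = \exp\bigl((\alpha X^n_v(s) + \beta(\A X^n(s))_v)/\sqrt n\bigr)$, so a Taylor expansion gives $\sqrt n\bigl(\lambda_v(Q^n(ns)/n)-1\bigr) = \alpha X^n_v(s) + \beta(\A X^n(s))_v + O(1/\sqrt n)$ uniformly on bounded sets. Since the unit service rate matches the zero-order arrival rate, the residual drift is of order $1/\sqrt n$, exactly matched by the martingale fluctuations under the $\sqrt n$ spatial scaling; this critical-loading balance is what produces the Ornstein--Uhlenbeck drift in the limit.

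The first step is to write out, via Dynkin's formula applied to the coordinate functions, the semimartingale decomposition
\[
X^n_v(t) = X^n_v(0) + \int_0^t \bigl(\alpha X^n_v(s) + \beta(\A X^n(s))_v\bigr)\, ds + M^n_v(t) + \phi^n_v(t) + \varepsilon^n_v(t),
\]
where $M^n_v$ is the compensated-jump martingale, $\phi^n_v(t) = \sqrt n \int_0^t \mathbf{1}_{\{\Qn_v(ns) = 0\}}\, ds$ is the nondecreasing term pushing upward at the origin, and $\varepsilon^n_v$ collects the Taylor remainder together with the contribution from the upper boundary $\{\Qn_v(ns) = N_n\}$. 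Because jumps have size $1/\sqrt n$, $\phi^n_v$ grows only while $X^n_v$ lies within $1/\sqrt n$ of zero, so $(X^n_v, \phi^n_v)$ is asymptotically the Skorohod pair associated with $X^n_v - \phi^n_v$, to which Lemma~\ref{le:maj} and the explicit formula \eqref{Gamma-def} can be applied.

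Three ingredients are then needed. First, the upper boundary must vanish asymptotically: using $N_n/\sqrt n \to \infty$ together with a Gronwall estimate on $\sup_{s\le t} X^n_v(s)$ that exploits uniform boundedness of $\lambda_v$ on the state space (which follows from $N_n/n\to 0$), one shows $\P\bigl(\sup_{s\le t}\Qn_v(ns)=N_n\bigr)\to 0$ on every compact interval, so $\varepsilon^n_v \to 0$ in probability uniformly on compacts. Second, the predictable quadratic covariation
\[
\langle M^n_v, M^n_u\rangle_t = \delta_{vu}\int_0^t \bigl(\lambda_v(\Qn(ns)/n)\mathbf{1}_{\{\Qn_v < N_n\}} + \mathbf{1}_{\{\Qn_v > 0\}}\bigr)\, ds
\]
tends to $2t\,\delta_{vu}$ because $\lambda_v \to 1$ on the relevant range and $\int_0^t \mathbf{1}_{\{\Qn_v(ns)=0\}}\,ds = \phi^n_v(t)/\sqrt n$ is negligible once the $\phi^n_v$ are shown tight; since the jumps of $M^n_v$ are $O(1/\sqrt n)$, Rebolledo's martingale FCLT yields $M^n \Rightarrow \sqrt 2\, B$. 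Third, Lipschitz continuity of $\Gamma$ combined with a Gronwall argument lifts tightness of $M^n$ to joint tightness of $(X^n, \phi^n)$ in $\mathbb D(\R_+, \R^V)^2$.

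Finally, any subsequential weak limit $(X, \phi)$ of $(X^n, \phi^n)$ must satisfy the reflected system \eqref{eq:20'}--\eqref{Yv}, and the uniqueness of the associated normal-reflection martingale problem cited from \cite{AnuLip} identifies the limit, promoting subsequential convergence to convergence in distribution. The main obstacle I anticipate is the first of the three ingredients: obtaining a priori bounds on $\sup_{s\le t}X^n_v(s)$ sharp enough to rule out the upper boundary, since $\alpha$ may be positive and the state-dependent drift is itself unbounded, so the estimate must genuinely exploit the bounded-exponential form of $\lambda_v$ together with the assumption $N_n/n\to 0$ rather than any crude linear comparison.
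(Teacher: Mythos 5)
Your proposal is correct and follows essentially the same route as the paper: the Poisson/Dynkin semimartingale decomposition with the critical-loading cancellation, a Gronwall bound on $\sup_{s\le t}X^n_v(s)$ exploiting the Taylor control of $\lambda_v$ under $N_n/n\to 0$ to kill the upper-boundary term via $N_n/\sqrt n\to\infty$, the martingale FCLT giving $M^n\Rightarrow\sqrt2\,B$ from $\langle M^n_v\rangle(t)\to 2t$ and vanishing jumps, and identification of the limit through the Lipschitz Skorohod map and the reflected martingale problem of \cite{AnuLip}. The only (harmless) cosmetic differences are that you linearise the drift inside the decomposition from the start, whereas the paper carries the exact exponential drift $D^n_v$ and passes to the limit at the end, and that the Skorohod identity $X^n_v=\Gamma(\psi^n_v)$ is in fact exact here (since $\phi^n_v$ increases only on $\{X^n_v=0\}$), not merely asymptotic.
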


\section{The proof of Theorem~\ref{the:1}}
\label{proof}

In this section  we denote by $c_i$ various positive constants whose 
specific  values are immaterial 
for the proof of the theorem.

The model being Markovian implies that the following representation holds
(see \cite[Chapter 6, Section 4]{Kurtz} for more detail):
\begin{align}
\label{P1}
Q_v^n(t)&=Q_v^n(0)+\Pi^n_{v,1}\bigg(\int_0^t
\lambda_v\Big(\frac{Q^n(s)}{n}\Big)
{\bf 1}_{\{Q_v^n(s)<N_n\}}ds\bigg)-
\Pi_{v,2}^n\bigg(\int_0^t{\bf 1}_{\{Q^n_v(s)>0\}}ds\bigg),
\end{align}
where   $\Pi^n_{v,1}=(\Pi^n_{v,1}(t),\, t\geq 0)$ and $
\Pi^n_{v,2}=
(\Pi^n_{v,2}(t),\,t\geq 0)$,\, $v\in  V$,  are 
independent unit rate  Poisson processes.
Via an elementary algebraic manipulation, \eqref{P1} can be written as
\begin{align}
\label{P2}
Q^n_v(t)&=Q^n_v(0)+\int_0^t
\bigg(\lambda_v\Big(\frac{Q^n(s)}{n}\Big)-1\bigg)ds
\notag+M^n_{v,1}(t)-M^n_{v,2}(t)\\
&+\int_0^t{\bf 1}_{\{Q^n_v(s)=0\}}ds
-\int_0^t\lambda_v\Big(\frac{Q^n(s)}{n}\Big){\bf 1}_{\{Q^n_v(s)=N_n\}}ds,
\end{align}
where 
\begin{align}
\label{M1}
M^n_{v,1}(t)&=\Pi^n_{v,1}\bigg(\int_0^t
\lambda_v\Big(\frac{Q^n(s)}{n}\Big){\bf 1}_{\{Q^n_v(s)<N_n\}}ds
\bigg)-
\int_0^t
\lambda_v\Big(\frac{Q^n(s)}{n}\Big){\bf 1}_{\{Q^n_v(s)<N_n\}}ds\intertext{ and }
\label{M2}
M^n_{v,2}(t)&=\Pi^n_{v,2}\bigg(\int_0^t
{\bf 1}_{\{Q^n_v(s)>0\}}ds\bigg)-
\int_0^t
{\bf 1}_{\{Q^n_v(s)>0\}}ds.
\end{align}
An application of the results of \cite[Chapter 6, Section 4]{Kurtz}
to \eqref{M1} and \eqref{M2} implies that
the processes  $M^n_{v,1}=(M^n_{v,1}(t)\,,t\ge0)$ and 
$M^n_{v,2}=(M^n_{v,2}(t),t\ge0)$ 
are orthogonal locally square integrable martingales relative to the
natural filtration with the 
predictable quadratic variation (angle-bracket)  processes
$\langle M^n_{v,1}\rangle=(\langle M^n_{v,1}\rangle(t),t\ge0)$ given by 
\begin{equation}
\label{M1sq}
\langle M^n_{v,1}\rangle(t)=
\int_0^t
\lambda_v\Big(\frac{Q^n(s)}{n}\Big){\bf 1}_{\{Q^n_v(s)<N_n\}}ds
\end{equation}
and 
\begin{equation}
\label{M2sq}
\langle M^n_{v,2}\rangle(t)=\int_0^t
{\bf 1}_{\{Q^n_v(s)>0\}}ds,
\end{equation}
respectively.

By~\eqref{P2},
\begin{equation}
\label{Xnv}
\Xn_v(t)=\Xn_v(0)+D^n_v(t)
+M^n_v(t)+\phi^n_{v}(t)-\varphi_v^n(t),
  \end{equation}
where
\begin{align}
  \label{eq:12}
  D^n_v(t)&=
\sqrt{n}\int\limits_0^{t}
\left(\lambda_v\Big(\frac{\Xn(s)}{\sqrt{n}}\Big)-1\right)ds,\\
\notag 
M^n_v(t)&=\frac{1}{\sqrt{n}}\,\Mn_{v,1}(nt)-\frac{1}{\sqrt{n}}\,\Mn_{v,2}(nt),\\
\label{eq:12c}
\phi^n_{v}(t)&=\sqrt{n}\int_0^{t}
{\bf 1}_{\{\Xn_v(s)=0\}}\,ds,\\
\label{varphi}
\varphi^n_v(t)&=\sqrt{n}\int_0^t
\lambda_v\Big(\frac{\Xn(s)}{\sqrt{n}}\Big)
{\bf 1}_{\{\Xn_v(s)=N_n/\sqrt{n}\}}\,ds.
\end{align}
The processes $M_v^n=(M_v^n(t),\, t\ge0), \, v\in V$,
are orthogonal locally square integrable
martingales with predictable quadratic variation processes
$\langle M^n_v\rangle=(\langle M^n_v\rangle(t), t\ge0)$ given by  (see
\eqref{M1sq} and \eqref{M2sq})
\begin{equation}
  \label{eq:1}
  \langle M^n_v\rangle(t)=
\int_0^t\lambda_v\Big(\frac{X^n(s)}{\sqrt{n}}\Big)
{\bf 1}_{\{X^n_v(s)<N_n/\sqrt{n}\}}ds+
\int_0^t
{\bf 1}_{\{X^n_v(s)>0\}}ds.
\end{equation}
Hence, with $M^n(t)=(M^n_v(t),\, v\in V)$,\, the process 
$M^n=(M^n(t),t\ge0)$ is an $\R^V$--valued
locally square integrable  martingale with predictable quadratic
variation process  $\langle M^n\rangle=(\langle
M^n\rangle_{v,v'},\,v,v'\in V)$\,, where 
$\langle M^n\rangle_{v,v'}=(\langle M^n\rangle_{v,v'}(t),\, t\ge0)$  and
\begin{equation}
  \label{eq:5}
  \langle M^n\rangle_{v,v'}(t)=
\langle M^n_{v}\rangle(t)\mathbf 1_{\{v=v'\}}.
\end{equation}

 As the next step, we  establish tightness properties for
 processes $M^n$, 
$D^n$ and $X^n$ as random elements of the
associated Skorohod spaces.

\begin{lemma}
\label{le:X^n_bound}For all $t>0$ and $v\in V$,
  \begin{equation}
  \label{eq:9v}
\lim_{A\to\infty}\limsup_{n\to\infty}  \P\Big(\sup_{s\le t} |M^n_v(s)|
\ge A\Big)=0
\end{equation}
and \begin{equation}
  \label{eq:5v}
  \lim_{A\to\infty}\limsup_{n\to\infty}
\P\Big(\sup_{s\le t} X^n_v(s)\ge A\Big)=0\,.
\end{equation}
\end{lemma}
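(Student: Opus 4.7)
The two bounds would be established in sequence, with \eqref{eq:9v} feeding into \eqref{eq:5v}. For the martingale estimate, I would invoke Doob's $L^2$ maximal inequality, which reduces the task to controlling the predictable quadratic variation $\langle M^n_v\rangle(t)$ from~\eqref{eq:1}. The key a priori bound is the trivial one $X^n_v(s)\le N_n/\sqrt n$, valid by construction of the truncated chain. Combined with $N_n/n\to 0$ and the explicit form of $\lambda_v$, this yields
\[
\lambda_v\bigl(X^n(s)/\sqrt n\bigr)\le e^{(|\alpha|+|\beta|\Delta)N_n/n}\le c_1
\]
for all $n$ large, where $\Delta$ is the maximum vertex degree in $G$. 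Hence $\langle M^n_v\rangle(t)\le (c_1+1)t$, and Doob's inequality gives $\P(\sup_{s\le t}|M^n_v(s)|\ge A)\le 4(c_1+1)t/A^2$, which settles~\eqref{eq:9v}.

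For \eqref{eq:5v} my plan is to strip off the upper-reflection term $\varphi^n_v$ using Lemma~\ref{le:maj}, linearise the drift $D^n_v$ via the smallness of $X^n/\sqrt n$, and close the loop by Gronwall. From~\eqref{Xnv} one identifies $X^n_v=\Gamma(\psi^n_v)$ with $\psi^n_v(t):=X^n_v(0)+D^n_v(t)+M^n_v(t)-\varphi^n_v(t)$ and $\phi^n_v$ playing the role of the Skorohod local time. Setting $\tilde\psi^n_v:=X^n_v(0)+D^n_v+M^n_v$, the difference $\tilde\psi^n_v-\psi^n_v=\varphi^n_v$ is nondecreasing and nonnegative, so Lemma~\ref{le:maj} yields $X^n_v\le\Gamma(\tilde\psi^n_v)$, and then \eqref{Gamma-def} implies $\Gamma(\tilde\psi^n_v)(t)\le 2\sup_{s\le t}|\tilde\psi^n_v(s)|$. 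Using $|e^y-1|\le e^{|y|}|y|$ together with $\|X^n/\sqrt n\|_\infty\le N_n/n\to 0$, one obtains
\[
\Big|\lambda_v\bigl(X^n(s)/\sqrt n\bigr)-1\Big|\le c_2\|X^n(s)\|_\infty/\sqrt n,
\]
whence $|D^n_v(s)|\le c_2\int_0^s\|X^n(u)\|_\infty\,du$. Putting these together, the quantity $g^n(s):=\sup_{r\le s}\|X^n(r)\|_\infty$ satisfies
\[
g^n(s)\le 2\|X^n(0)\|_\infty+2c_2\int_0^s g^n(u)\,du+2H^n(s),
\]
where $H^n(s):=\max_{v\in V}\sup_{r\le s}|M^n_v(r)|$. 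Gronwall's inequality gives $g^n(t)\le(2\|X^n(0)\|_\infty+2H^n(t))e^{2c_2 t}$, and \eqref{eq:5v} follows from the tightness of $H^n(t)$ already proved in the first part together with the boundedness of $\|X^n(0)\|_\infty$.

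The principal obstacle is disentangling the two reflection terms $\phi^n_v$ and $\varphi^n_v$ in \eqref{Xnv}: neither can be controlled directly, and the nonlinearity of $\lambda_v$ couples all coordinates through the drift. The majorisation principle of Lemma~\ref{le:maj} is the decisive device that eliminates $\varphi^n_v$ for free, after which the hypothesis $N_n/n\to 0$ renders the drift essentially linear and a standard Gronwall argument closes the estimate.
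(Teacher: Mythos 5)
Your proposal is correct and follows essentially the same route as the paper's proof: Doob's $L^2$ inequality with the uniform bound $\lambda_v\le e^{cN_n/n}$ for \eqref{eq:9v}, then Lemma~\ref{le:maj} to discard $\varphi^n_v$, the Lipschitz property of $\Gamma$, the linearised drift bound, and Gronwall for \eqref{eq:5v}. The only cosmetic difference is that you close the Gronwall loop with the coordinate-wise maximum $\|X^n\|_\infty$ where the paper sums over vertices and uses the degree bound \eqref{graph}; on a finite graph these are equivalent.
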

\begin{proof}
Recall first that 
\[
0\leq \frac{\Xn_v(s)}{\sqrt{n}}=\frac{\Qn_v(s)}{n}\leq \frac{N_n}{n},
\]
which implies by \eqref{lambda} that
 \begin{equation}
\label{lambda<}
\lambda_v\Big(\frac{X^n(s)}{\sqrt{n}}\Big)\leq
e^{(|\alpha|\Xn_v(s)+|\beta|(\A X^n(s))_v)/\sqrt{n}}\leq 
e^{c_1\frac{N_n}{n}}\to 1,
 \end{equation}
 as $n\to \infty$,
   and 
 \begin{equation}
  \label{Tay1}
  \sqrt{n}\bigg|\lambda_v\Big(\frac{X^n(s)}{\sqrt{n}}\Big)-1\bigg|
\le \big(|\alpha|\Xn_v(s)+|\beta|(\A X^n(s))_v\big)e^{c_2N_n/n},
 \end{equation}
 for 
  all sufficiently large $n$.

Further, by Doob's inequality,~\eqref{eq:1} and~\eqref{lambda<}, for all $A>0$\,,
\begin{equation}
  \label{eq:4v}
  \P\Big(\sup_{s\le t} |M^n_v(s)|\ge A\Big)
  \le
\frac{1}{A^2}\,\E\big(M^n_v(t)^2\big)
\le\frac{1}{A^2}\,\E\big(\langle M^n_v\rangle(t)\big)
\le \frac{1}{A^2}\big(e^{c_1\frac{N_n}{n}}+1\big)t,
\end{equation}
so that \eqref{eq:9v} holds.

 Let us show~\eqref{eq:5v}. To this end, 
 note that equation~\eqref{Xnv}  for $X^n_v=(X_v^n(t),\, t\geq 0)$ can be written as follows
\begin{equation}
\label{Xnv1}
X_v^n=\Gamma\left(\psi^n_v\right),
\end{equation}
where 
$\psi^n_v=(\Xn_v(0)+D^n_v(t) +M^n_v(t)-\varphi^n_{v}(t),\, t\geq 0)$.
Observe that the process $\varphi^n_v$ (see~\eqref{varphi}) is non-decreasing, so that 
$\widetilde\psi^n_v:=(\Xn_v(0)+D^n_v(t)+M^n_v(t),\, t\geq 0)$ strongly 
majorises $\psi^n_v$. Therefore, applying Lemma~\ref{le:maj}, we obtain that 
$X_v^n(t)\leq \Gamma(\widetilde\psi^n_v)(t)\text{ for all } t\geq0,
$ which implies, since the map $\Gamma$ is Lipschitz-continuous, that 
\begin{equation}
\label{eq:2va}
 X^n_v(t)
\le
K\Big(X^n_v(0)+\sup_{s\le t}|D^n_v(s)|+\sup_{s\le t}|M^n_v(s)|\Big),
\end{equation}
with  some $K>0$ for all $t\geq 0$.

By~\eqref{eq:12} and~\eqref{Tay1},
\begin{equation}
\label{Dv-bound}
|D^n_v(s)|\leq
\sqrt{n}\int\limits_0^{s}
\bigg|\lambda_v\Big(\frac{\Xn(s')}{\sqrt{n}}\Big)-1\bigg|ds'
\leq
e^{c_2N_n/n}
\int\limits_0^{s}
\big(|\alpha|\Xn_v(s')+|\beta|(\A X^n(s'))_v)\big)ds' 
\end{equation}
for  all $s>0$, and, hence, 
\begin{equation}
\label{Dv-sup}
\sup_{s\le t}|D^n_v(s)|\leq e^{c_2N_n/n}
\int\limits_0^{t}
\big(|\alpha|\Xn_v(s)+|\beta|(\A X^n(s))_v\big)ds
\end{equation}
for all $t>0$.

Equations \eqref{eq:2va} and ~\eqref{Dv-sup} 
yield the bound
\begin{equation}
\label{eq:2vb}
 X^n_v(s)
\le c_3
\bigg(X^n_v(0)+\int_0^s\big(
|\alpha|X^n_v(s')+|\beta|(\A X^n(s'))_v\big)\,ds'
+\sup_{s'\le s}|M^n_v(s')|\bigg).
\end{equation}
Further, 
observe that the following identity 
holds
\[
\sum\limits_{v\in V}(\A\bx)_v=\sum\limits_{v\in V}
\Big(\sum_{u:u\sim v}x_u\Big)
=\sum\limits_{v\in V}d_vx_v\quad\text{for}\quad\bx=(x_v,\, v\in V)\in \R^{V},
\]
where  $d_v$ is the  degree of  vertex $v\in V$ 
(i.e., $d_v$ is the number of neighbours of $v$).
Therefore, 
\begin{equation}
\label{graph}
\sum\limits_{v\in V}(\A\bx)_v
\leq \Big(\max_{v\in V}d_v\Big)\sum\limits_{v\in V}x_v
\quad\text{for}\quad\bx=(x_v,\, v\in V)\in \R_+^{V}.
\end{equation}
Summing up equations~\eqref{eq:2vb} and using~\eqref{graph}, we get 
that  
\begin{align*}
\sum\limits_{v\in V} X^n_v(s)
&\le
c_4\bigg(\sum_{v\in V}X^n_v(0)
+\int_0^s\sum_{v\in V} X^n_v(s')\,ds'
+\sum_{v\in V}\sup_{s'\le s}|M^n_v(s')|\bigg).
\end{align*} 

By the Gronwall--Bellman inequality,
\begin{equation}
  \label{eq:3v}
 \sup_{s\le t}\sum\limits_{v\in V} X^n_v(s)\le
c_5e^{c_5t}\sum\limits_{v\in V}
\Big(X^n_v(0)+\sup_{s\le t}|M^n_v(s)|\Big)\,.
\end{equation}
Finally, combining \eqref{eq:9v} and \eqref{eq:3v} with the facts that
(due to  $\Xn_v(t)$ being nonnegative)
$X^n_v(s)\leq \sum\limits_{u\in V} X^n_u(s)$
and that the sequences $X^n_v(0)$ converge implies~\eqref{eq:5v}, as claimed.
\end{proof}

\begin{lemma}
\label{D-tight}
The sequence 
$D^n$ is $\mathbb C$--tight in 
$\mathbb D(\mathbb R_+,\mathbb R^{V})$\,, i.e.
it is tight 
with all accumulation points being laws of continuous
processes.
\end{lemma}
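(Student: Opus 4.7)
Our plan is to exploit the absolute continuity of the paths of $D^n$. Since the integrand in~\eqref{eq:12} is pathwise bounded for each fixed $n$, each path $D^n_v$ is Lipschitz with a (random) constant, so $\mathbb{C}$--tightness will follow once this constant is shown to be stochastically bounded, uniformly in $n$, on compact time intervals. Starting from~\eqref{Tay1}, we obtain, for $0 \le s \le t$ and all sufficiently large $n$,
\begin{equation*}
|D^n_v(t) - D^n_v(s)| \le e^{c_2 N_n/n} \int_s^t \big(|\alpha| X^n_v(u) + |\beta|(\A X^n(u))_v\big)\,du.
\end{equation*}

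The key step is to invoke Lemma~\ref{le:X^n_bound}: the stochastic boundedness~\eqref{eq:5v} of $\sup_{u \le T} X^n_v(u)$ for each $v \in V$, combined with the assumption $N_n/n \to 0$, produces, for any $T,\eta > 0$, an $A > 0$ and an $n_0 \in \N$ such that the event
\begin{equation*}
\Omega_n := \Big\{\sup_{u \le T,\, v \in V} X^n_v(u) \le A\Big\}
\end{equation*}
satisfies $\P(\Omega_n) \ge 1-\eta$ for all $n \ge n_0$ and $e^{c_2 N_n/n} \le 2$ on the same range of $n$. On $\Omega_n$, the integrand in the previous display is bounded by a constant $C = C(A,\alpha,\beta,G)$, whence
\begin{equation*}
|D^n_v(t) - D^n_v(s)| \le C(t - s), \qquad 0 \le s \le t \le T,\ v \in V.
\end{equation*}

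Finally, this equi-Lipschitz bound in probability, together with $D^n(0) = 0$, yields $\mathbb{C}$--tightness via Arzel\`a--Ascoli: on each $\Omega_n$, the restrictions of the paths to $[0, T]$ form a uniformly Lipschitz, uniformly bounded family, hence a relatively compact subset of $\mathbb{C}([0, T], \R^V)$. Letting $\eta \to 0$ gives tightness in $\mathbb{C}([0, T], \R^V)$ for each $T$, and hence $\mathbb{C}$--tightness in $\D(\R_+, \R^V)$ in the usual fashion (a diagonal argument across $T \to \infty$). We do not anticipate any substantive obstacle here; the lemma is essentially a direct consequence of the Taylor estimate~\eqref{Tay1} and the compact containment already established in Lemma~\ref{le:X^n_bound}. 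The only mild subtlety is to observe that the prefactor $e^{c_2 N_n/n}$ stays bounded thanks to the hypothesis $N_n/n \to 0$, so no extra control on $X^n_v$ near the upper barrier $N_n/\sqrt n$ is needed.
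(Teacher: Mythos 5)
Your proposal is correct and follows essentially the same route as the paper: the increment bound derived from the Taylor estimate~\eqref{Tay1} combined with the compact containment of $X^n$ from Lemma~\ref{le:X^n_bound}. The only difference is cosmetic --- the paper verifies the standard stochastic-boundedness-plus-modulus-of-continuity criterion for $\mathbb C$--tightness, whereas you package the same bounds as an equi-Lipschitz property on a high-probability event and invoke Arzel\`a--Ascoli.
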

\begin{proof}
By \eqref{eq:12},~\eqref{eq:5v} and~\eqref{Tay1}  
\begin{equation}
  \label{eq:6}
  \lim_{A\to\infty}\limsup_{n\to\infty}
\P\Big(\sup_{s\le t}| D^n_v(s)|\ge A\Big)=0\quad\text{for all}\quad v\in V.
\end{equation}
Furthermore, similarly to \eqref{Dv-bound}, we have that 
\begin{align*}
|D^n_v(s)-D^n_v(s')|
\leq e^{c_2\frac{N_n}{n}}
\int\limits_{s'}^{s}
\big(|\alpha|\Xn_v(s'')+|\beta|(\A X^n(s''))_v)\big)ds''\\
\le c_6
|s-s'|\sup_{s''\in[s',s]}
\big(|\alpha|\Xn_v(s'')+|\beta|(\A X^n(s''))_v)\big)
\end{align*}
for $s'\le s$.
Therefore, by Lemma \ref{le:X^n_bound},  for any $\epsilon>0$,
\begin{equation}
  \label{eq:7}
  \lim_{\delta\to0}\limsup_{n\to\infty}
\P\bigg(\sup_{s,s'\le t:\,|s-s'|\le\delta}|D^n_v(s)-D^n_v(s')|>
\epsilon\bigg)=0\quad\text{for all}\quad v\in V.
\end{equation}
Equations~\eqref{eq:6} and \eqref{eq:7} imply 
 that for each $v\in V$ the sequence
 $D^n_v=(D^n_v(t),t\ge0)$  is $\mathbb C$--tight in 
$\mathbb D(\mathbb R_+,\mathbb R)$, which implies the lemma.
\end{proof}

\begin{lemma}
\label{M-tight}
The sequence
$M^n$ is $\mathbb C$-tight in $\D(\R_+,\R^V)$.
\end{lemma}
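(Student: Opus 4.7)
The plan is to deduce $\mathbb C$-tightness of $M^n$ in two steps: (i) verify tightness of each coordinate $M^n_v$ in $\D(\R_+,\R)$ via Aldous's criterion, and (ii) upgrade tightness to $\mathbb C$-tightness by showing that the jumps of $M^n_v$ vanish uniformly.

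For step (i), Lemma~\ref{le:X^n_bound} already supplies the compact-containment half, since \eqref{eq:9v} asserts that $\sup_{s\le t}|M^n_v(s)|$ is stochastically bounded uniformly in $n$. For the oscillation half of Aldous's condition, I would fix $v\in V$, $t>0$, $\epsilon>0$, a sequence of stopping times $\tau_n\le t$ relative to the natural filtration of $\Qn$, and $\delta_n\downarrow 0$. Optional sampling applied to the square-integrable martingale $M^n_v$, combined with the expression \eqref{eq:1} for $\langle M^n_v\rangle$ and the uniform bound \eqref{lambda<}, yields
\begin{equation*}
\E\bigl[(M^n_v(\tau_n+\delta_n)-M^n_v(\tau_n))^2\bigr]
= \E\bigl[\langle M^n_v\rangle(\tau_n+\delta_n)-\langle M^n_v\rangle(\tau_n)\bigr]
\le \bigl(e^{c_1 N_n/n}+1\bigr)\delta_n,
\end{equation*}
which tends to $0$. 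Chebyshev's inequality then delivers $\P(|M^n_v(\tau_n+\delta_n)-M^n_v(\tau_n)|>\epsilon)\to 0$, so Aldous's condition holds and $M^n_v$ is tight in $\D(\R_+,\R)$.

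For step (ii), I observe that $M^n_v$ is the difference of two compensated, time-changed unit-rate Poisson processes rescaled by $1/\sqrt n$, so its jumps are uniformly bounded in absolute value by $1/\sqrt n$. Hence $\sup_{s\le t}|\Delta M^n_v(s)|\le 1/\sqrt n\to 0$ deterministically, which forces every accumulation point of the laws of $M^n_v$ to concentrate on continuous paths; this upgrades the tightness in $\D(\R_+,\R)$ to $\mathbb C$-tightness. Since $V$ is finite, coordinatewise $\mathbb C$-tightness immediately gives $\mathbb C$-tightness of the $\R^V$-valued process $M^n$. There is no genuine obstacle: the whole argument hinges on the uniform control of the quadratic-variation density, and that control is already encoded in \eqref{lambda<} from the proof of Lemma~\ref{le:X^n_bound}.
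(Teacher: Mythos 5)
Your proof is correct and follows essentially the same route as the paper: stochastic boundedness from \eqref{eq:9v}, Aldous's criterion verified via optional sampling and the uniform bound on the angle-bracket increment coming from \eqref{eq:1} and \eqref{lambda<}, and the $1/\sqrt{n}$ bound on the jumps to upgrade tightness to $\mathbb C$-tightness. The only cosmetic difference is that the paper controls $\sup_{t\le\delta}|M^n_v(\tau+t)-M^n_v(\tau)|$ with a Doob-inequality bound analogous to \eqref{eq:4v}, while you use the single-increment (Chebyshev) formulation of Aldous's condition; both are standard and rest on the same estimate.
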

\begin{proof}
For arbitrary $T>0$, 
\begin{equation}
\label{eq:9v-a}
\lim_{\delta\to0}\limsup_{n\to\infty}
\sup_{\tau\le T}\P\left(\sup_{t\le \delta}\abs{M_v^n(\tau+t)-M_v^n(\tau)}>\epsilon\right)=0
\quad\text{for all}\quad v\in V,
\end{equation}
where $\tau$ represents a stopping time. The proof of~\eqref{eq:9v-a} 
uses a bound that is analogous to \eqref{eq:4v}
  if we note that 
 by Doob's optional sampling theorem,
$(M_v^n(\tau+t)-M_v^n(\tau),t\ge0)$
 is a square integrable martingale
with predictable quadratic variation process $(
\langle M_v^n\rangle (\tau+t)-\langle M_v^n\rangle(\tau),t\ge0)$. 

Combining~\eqref{eq:9v} and~\eqref{eq:9v-a} 
with the Aldous tightness criterion (see, e.g. \cite[Theorem 6.3.1]{lipshir}),
 gives tightness of $M^n_v$ in $\mathbb D(\mathbb R_+,\mathbb R)$ for all $v\in V$.
The  $\mathbb C$-tightness of each  $M_v^n$  holds because $M^n_v$ is tight with jumps being equal to 
 $1/\sqrt n$ in absolute value, and the lemma follows.
\end{proof}

We are going to finish the proof of the theorem by
 showing $\mathbb C$-tightness of $\Xn$ and uniqueness of the limit point.

 Show first that the process 
 $\varphi^n_v(t)$ (defined in~\eqref{varphi}) tends to $0$ in probability,  as $n\to \infty$.
  To this end note  that by~\eqref{lambda<}
 $$\varphi^n_v(t)\leq e^{c_1\frac{N_n}{n}} \sqrt{n} \int_0^{t}
{\bf 1}_{\{\Xn_v(s)=N_n/\sqrt{n}\}}\,ds, 
 $$ 
where $e^{c_1\frac{N_n}{n}}\to 1$.
Fix an arbitrary $\eps>0$. Since $N_n/\sqrt{n}\to \infty$, we 
can choose  (recalling~\eqref{eq:5v}) $A<N_n/\sqrt{n}$ to be so large that
$$\limsup_n\P\left(\sup_{s\le t} X_v^n(s)>A\right)\leq \eps.
$$
Observe now that  if $\int_0^{t}
{\bf 1}_{\{\Xn_v(s)=N_n/\sqrt{n}\}}\,ds>0$, then the process exceeds the upper bound
$$
\P\bigg( \sqrt{n} \int_0^{t}
{\bf 1}_{\{\Xn_v(s)=N_n/\sqrt{n}\}}\,ds>0\bigg)\leq 
\P\left(\sup_{s\le t} X_v^n(s)>A\right)
$$
 and, hence,
$$
\limsup_{n}\P\bigg( \sqrt{n} \int_0^{t}
{\bf 1}_{\{\Xn_v(s)=N_n/\sqrt{n}\}}\,ds>0\bigg)\leq 
\eps.
$$
Since $\eps>0$ was chosen arbitrary, it follows   that 
\begin{equation}
  \label{right=0}
\limsup_{n\to\infty}\P\bigg( \sqrt{n} \int_0^{t}
{\bf 1}_{\{\Xn_v(s)=N_n/\sqrt{n}\}}\,ds>0\bigg)=0,
\end{equation}
which, in turn, implies 
that $\varphi^n_v(t)\to 0$ in probability, as $n\to \infty$.
Consequently, 
\begin{equation}
\label{t:1}
\int_0^t\lambda_v\Big(\frac{X^n(s)}{\sqrt{n}}\Big){\bf 1}_{\{\Xn_v(s)<N_n/\sqrt{n}\}}\,ds\to t,
\end{equation} 
in probability, as $n\to \infty$.

Since all the processes in
 \eqref{Xnv} except for $\phi^n_v$ are
asymptotically bounded in probability on bounded intervals (recall Lemma
\ref{le:X^n_bound} and
Lemma \ref{D-tight}),
it follows that
\[
  \lim_{A\to\infty}\limsup_{n\to\infty}
\P\big(\phi^n_v(t)>A\big)=0
\]
for any $t>0$, and, hence, owing to \eqref{eq:12c},
$\int_0^t\mathbf1_{\{X^n_v(s)=0\}}\,ds\to 0$
in probability, as $n\to \infty$, so that
\begin{equation}
\label{t:2}
\int_0^t\mathbf1_{\{X^n_v(s)>0\}}\,ds\to t.
\end{equation}
By~\eqref{eq:1}, \eqref{t:1} and \eqref{t:2},  $\langle M^n_v\rangle(t)\to 2t$
in probability, as $n\to \infty$.
Furthermore, by \eqref{eq:5}, 
$\langle M^n\rangle (t)\to 2{\mathbf I}t$, where ${\mathbf I}$ is the unit matrix. 
Since the jumps of $M^n_v$\,, 
being equal to $1/\sqrt{n}$\,, go to
zero uniformly, $M^n$ converges in distribution to 
 $\sqrt{2}B$, where $B=(B_v,\, v\in V)$ is a collection of independent
 standard Brownian motions (see e.g.
Corollary 3.24 on p.435  in  \cite{jacshir}).

By \eqref{Xnv1} 
  the process $X^n_v$ is the Skorohod reflection of the process 
$$\left(\Xn_v(0)+D^n_v(t) +M^n_v(t)-\varphi^n_v(t),\, t\ge0\right).$$
Using tightness of   $D^n$ (see Lemma~\ref{D-tight}),
 convergence of $M^n$ to $\sqrt{2}B$, convergence of $\varphi^n_v(t)$
 to $0$,
 the Lipschitz continuity of the Skorohod map $\Gamma$ 
and the continuous mapping theorem we obtain  that 
$X^n$ is $\mathbb C$--tight.
Further, if  $X=(X_v,\, v\in V)$ is a subsequential limit of $X^n$ for
convergence in distribuion, then in distribution owing to
\eqref{lambda} and  \eqref{eq:12},
 $$D^{n}_v(t)=
\sqrt{n}\int\limits_0^{t}
\bigg(\lambda_v\Big(\frac{X^{n}(s)}{\sqrt{n}}\Big)-1\bigg)ds 
\to D_v(t):=\int\limits_0^t\big(\alpha X_v(s)+\beta(\A X(s))_v\big)ds,
$$
  as $n\to \infty$, so that $X_v=\Gamma(Y_v)$, $v\in V$,  where $Y_v$, $v\in V$, follow~\eqref{Yv}.
Thus,   the process $X$ in Theorem~\ref{the:1} is the  unique limit 
 point of $X^n$,  as claimed.

\section{Stationary distribution  of the CTMC $Q(t)$ and its diffusion limit}
\label{stationary-dist}

Recall   the  adjacency matrix $\A$ 
of the graph $G=(V,E)$. Let  $(\cdot, \cdot)$ denote
  the Euclidean scalar product.
Define 
the function
\begin{equation}
   \label{W}
   W(\bx)=\frac{\alpha }{2}\sum_{v}x_v(x_v-1)+\beta \sum\limits_{v\sim u}x_vx_u
   =
   \frac{1}{2}\big((\alpha{\bf I}+\beta\A))\bx, \bx\big)-\frac{\alpha}{2}\sum_{v}x_v
\end{equation}
for $\bx=(x_v,\,v\in V)\in\R^{V}$.

\begin{lemma}
\label{Q-reversible}
The  CTMC $Q(t)$ is reversible with 
the stationary distribution given by
\begin{equation}
\label{measure}
\mu_{\alpha, \beta, N}(\bx)=\frac{e^{W(\bx)}}{\sum_{\by\in \Lambda_{N}}e^{W(\by)}},\quad
 \bx\in S_{N, V}.
\end{equation}
\end{lemma}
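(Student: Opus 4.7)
The plan is to verify detailed balance directly for $\mu_{\alpha,\beta,N}$, which simultaneously establishes reversibility and identifies this measure as a stationary distribution. Since the only nonzero off-diagonal rates are $r(\bx,\bx+\be_v)=\lambda_v(\bx)$ (when $x_v<N$) and $r(\bx+\be_v,\bx)=1$, detailed balance amounts to checking, for each $v\in V$ and each admissible $\bx$, the single identity
\[
\mu_{\alpha,\beta,N}(\bx)\,\lambda_v(\bx) \;=\; \mu_{\alpha,\beta,N}(\bx+\be_v).
\]
After the normalising constant in \eqref{measure} cancels, this is equivalent to
\[
\lambda_v(\bx) \;=\; \exp\!\bigl(W(\bx+\be_v)-W(\bx)\bigr),
\]
so the whole lemma reduces to computing the discrete gradient $W(\bx+\be_v)-W(\bx)$ and matching it with the exponent in \eqref{lambda}.

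That computation is direct from \eqref{W}. Working from the first expression for $W$, the diagonal part $\tfrac{\alpha}{2}\sum_{v'} x_{v'}(x_{v'}-1)$ contributes $\tfrac{\alpha}{2}\bigl[(x_v+1)x_v - x_v(x_v-1)\bigr]=\alpha x_v$, while the interaction sum $\beta\sum_{v'\sim u}x_{v'}x_u$ changes only on pairs containing $v$ and increments by $\beta\sum_{u\sim v}x_u=\beta(\A\bx)_v$. Equivalently, using the quadratic-form expression $W(\bx)=\tfrac12((\alpha\mathbf{I}+\beta\A)\bx,\bx)-\tfrac{\alpha}{2}\sum_{v'}x_{v'}$, a standard computation for a symmetric matrix $M$ gives $\tfrac12\bigl((M(\bx+\be_v),\bx+\be_v)-(M\bx,\bx)\bigr)=(M\bx)_v+\tfrac12 M_{vv}$; with $M=\alpha\mathbf{I}+\beta\A$ and $\A_{vv}=0$ this is $\alpha x_v+\beta(\A\bx)_v+\tfrac{\alpha}{2}$, and the linear correction $-\tfrac{\alpha}{2}\sum_{v'}x_{v'}$ contributes $-\tfrac{\alpha}{2}$, exactly cancelling the residual. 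Either way,
\[
W(\bx+\be_v)-W(\bx) \;=\; \alpha x_v+\beta(\A\bx)_v,
\]
which by \eqref{lambda} equals $\log\lambda_v(\bx)$. Detailed balance follows, and since the chain is irreducible on the finite set $S_{N,V}$, this upgrades to the statement that $\mu_{\alpha,\beta,N}$ is the unique stationary distribution and $Q(t)$ is reversible under it.

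There is no serious obstacle here; the lemma is essentially a bookkeeping verification. The one mild subtlety worth flagging is the role of the $-\tfrac{\alpha}{2}\sum_{v'} x_{v'}$ term in the quadratic-form presentation of $W$: it is precisely what strips off the spurious $\tfrac{\alpha}{2}$ produced by the diagonal of $\alpha\mathbf{I}$ when one differentiates $\tfrac12((\alpha\mathbf{I}+\beta\A)\bx,\bx)$ in the $\be_v$ direction, so that the resulting exponent matches $\lambda_v$ on the nose.
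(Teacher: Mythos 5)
Your proof is correct and follows essentially the same route as the paper: the paper also establishes the lemma via the detailed balance equation $r(\bx,\by)e^{W(\bx)}=e^{W(\by)}r(\by,\bx)$, merely citing the analogous computation in an earlier reference rather than writing out the increment $W(\bx+\be_v)-W(\bx)=\alpha x_v+\beta(\A\bx)_v$ as you do. Your explicit verification (and the remark on the role of the $-\tfrac{\alpha}{2}\sum_{v'}x_{v'}$ correction) is accurate.
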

The lemma follows from  the detailed balance equation
\begin{equation}
\label{balance1}
r(\bx,\by)e^{W(\bx)}=e^{W\left(\by\right)}
r(\by,\bx)\quad\text{for all}\quad \bx, \by\in S_{N,V},
\end{equation}
which is the same equation as the one used  in~\cite[Section 3.1]{JVS} to show that 
the corresponding countable   CTMC  (``$N=\infty$'')
 is also   reversible with the invariant measure given by the 
 function    $e^{W(\bx)}$,  $\bx\in \Z_{+}^{V}$.
  \begin{remark}
  {\rm 
Note that in  the special case $N=1$ the change of variables $y_v=2x_v-1$ induces  
 a  probability measure on $\{-1,1\}^{V}$ which is 
 a special case of the Ising model on the  graph $G=(V,E)$.  
It was shown in~\cite[Section 4.8]{VS2023} 
that in the case $\beta>0$ the probability distribution 
$\mu_{\alpha,\beta, N}$ possesses  monotonicity properties, 
which  are similar to those of the ferromagnetic Ising model (e.g. see~\cite{GHM} and references therein).
}
\end{remark}

Further, let 
$\Xn$  be the process defined in~\eqref{Xn}. 
It follows from Lemma~\ref{Q-reversible},   that $\Xn$ is a reversible CTMC 
with the state space $S^n_{N,V}:=\{0,1/\sqrt{n},\ldots,N_n/\sqrt{n}\}^V$
and  the stationary distribution proportional to the function 
$e^{W_n(\bx^n)},\, \bx^n\in S^n_{N,V}$, 
where 
$$W_n(\bx^n)=\frac{\alpha }{2}\sum_{v}(x_v^n)^2+\beta \sum\limits_{v\sim u}x_v^nx_u^n-
\frac{\alpha}{2\sqrt{n}}\sum\limits_{v\in V}x^n_v\quad\text{for}\quad 
\bx^n=(x_v^n,\, v\in V).$$  
 If a sequence of states
$(\bx^n\in S^n_{N,V},\, n\in \N)$ converges component-wise  to 
$\bx=(x_v,\, v\in V)\in\R_{+}^V$, as  $n\to \infty$, 
then 
\begin{equation}
\label{U}
W_n(\bx^n)\to U(\bx):=
\frac{\alpha }{2}\sum_{v}x_v^2+\beta \sum\limits_{v\sim u}x_vx_u=
\frac{1}{2}((\alpha{\bf I}+\beta\A)\bx,\bx).
\end{equation}

  \begin{lemma}
\label{integral}
The integral 
\begin{equation}
\label{Z_U}
Z_U:=\int_{\R_+^{V}}e^{U(\bx)}d\bx<\infty
\end{equation}
 if and only if $\alpha<0$ and 
$\alpha+\beta\nu(G)<0$,  where $\nu(G)$ is the principal  eigenvalue of the 
 graph $G$.
\end{lemma}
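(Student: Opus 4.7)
The plan is to reduce the integrability question to a sign condition on the quadratic form $U$ restricted to the nonnegative cone $\R_+^V$, and then to identify the optimizer of that form on the cone using Perron--Frobenius theory. Write $U(\bx)=\tfrac12(B\bx,\bx)$ with $B=\alpha\mathbf{I}+\beta\A$ symmetric, and set
\[
\kappa:=\sup\{(B\omega,\omega):\omega\in\R_+^V,\,|\omega|=1\},
\]
which is attained by compactness and continuity. I claim $Z_U<\infty$ iff $\kappa<0$, and $\kappa<0$ iff $\alpha<0$ and $\alpha+\beta\nu(G)<0$.

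First I would establish the integrability equivalence. Passing to polar coordinates $\bx=r\omega$ on $\R_+^V\setminus\{0\}$ and using homogeneity $U(r\omega)=r^2U(\omega)$,
\[
Z_U=\int_{\mathbb{S}^+}\!\!\int_0^\infty r^{|V|-1}\,e^{r^2(B\omega,\omega)/2}\,dr\,d\omega,
\]
where $\mathbb{S}^+=\{\omega\in\R_+^V:|\omega|=1\}$. If $\kappa<0$, the radial integral is bounded uniformly in $\omega$ by a multiple of $|\kappa|^{-|V|/2}$ and $Z_U<\infty$. Conversely, if $\kappa\ge0$, continuity of $\omega\mapsto(B\omega,\omega)$ produces an open subset of $\mathbb{S}^+$ on which $(B\omega,\omega)\ge -\epsilon$ for arbitrarily small $\epsilon>0$; a Fubini-type argument then forces $Z_U=\infty$.

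Next I would compute $\kappa$. For necessity, testing $\bx=\be_v$ gives $(B\be_v,\be_v)=\alpha$ (since $a_{vv}=0$), so $\kappa<0$ forces $\alpha<0$; testing against the Perron--Frobenius eigenvector $\bv$ (which can be chosen with $\bv\ge0$, $|\bv|=1$, and $\A\bv=\nu(G)\bv$) yields $(B\bv,\bv)=\alpha+\beta\nu(G)$, forcing $\alpha+\beta\nu(G)<0$. For sufficiency I would split on the sign of $\beta$. If $\beta\le0$, then the observation $(\A\bx,\bx)=2\sum_{u\sim v}x_ux_v\ge0$ for $\bx\in\R_+^V$ gives
\[
(B\bx,\bx)=\alpha|\bx|^2+\beta(\A\bx,\bx)\le\alpha|\bx|^2,
\]
so $\kappa\le\alpha<0$. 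If $\beta>0$, symmetry of $B$ and the Rayleigh principle yield $\max_{|\bx|=1}(B\bx,\bx)=\lambda_{\max}(B)=\alpha+\beta\nu(G)<0$, so in particular $\kappa<0$.

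The main technical point is the equivalence in the first step, namely converting integrability into the sign condition $\kappa<0$; once that is in place, Perron--Frobenius makes the remaining translation routine. A minor caveat is the case $\beta>0$: there one must know that the Perron--Frobenius eigenvector of $\A$ is nonnegative (so that it actually witnesses the supremum over the positive cone, not merely over the full sphere), which is the classical Perron--Frobenius statement for the symmetric nonnegative matrix $\A$.
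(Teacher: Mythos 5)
Your sufficiency argument is correct and complete, and it is in fact more careful than the paper's own sketch: the paper merely notes that $-\alpha\mathbf{I}-\beta\A$ is positive definite under the stated conditions, which fails for large negative $\beta$ (for a single edge with $\alpha=-1$, $\beta=-10$ the matrix $\mathbf{I}+10\A$ has eigenvalues $11$ and $-9$), whereas your split into $\beta\le0$ (where the cone restriction gives $(B\bx,\bx)\le\alpha|\bx|^2$) and $\beta>0$ (where the Rayleigh principle gives $\lambda_{\max}(B)=\alpha+\beta\nu(G)$) handles both cases properly. The identification of the necessary conditions via the test vectors $\be_v$ and the Perron--Frobenius eigenvector is also fine.

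The necessity direction, however, has a genuine gap, and it cannot be repaired. For each fixed $\epsilon>0$ your Fubini bound yields $Z_U\ge |\mathcal{N}_\epsilon|\cdot C\epsilon^{-|V|/2}$ with $\mathcal{N}_\epsilon=\{\omega\in\mathbb{S}^+:(B\omega,\omega)\ge-\epsilon\}$; this is finite, and to conclude $Z_U=\infty$ you would need $|\mathcal{N}_\epsilon|\,\epsilon^{-|V|/2}\to\infty$ as $\epsilon\to0$. Since $|\mathcal{N}_\epsilon|$ decreases as $\epsilon\downarrow0$ (possibly to zero when the maximiser is isolated), nothing follows in the critical case $\kappa=0$. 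Worse, the intermediate claim ``$\kappa\ge0$ implies $Z_U=\infty$'' is false. Take $G=K_3$, $\alpha=0$, $\beta=-1$; then $\kappa=0$, attained at each $\be_v$, yet
\[
Z_U=\int_{\R_+^{3}}e^{-(x_1x_2+x_2x_3+x_3x_1)}\,d\bx
=\int_{\R_+^{2}}\frac{e^{-x_1x_2}}{x_1+x_2}\,dx_1\,dx_2<\infty,
\]
as one checks by splitting at $x_1+x_2=1$: near the origin bound the exponential by $1$ and note that $(x_1+x_2)^{-1}$ is locally integrable in two dimensions, while for $x_1\ge1/2$ bound $(x_1+x_2)^{-1}$ by $x_1^{-1}$ and integrate out $x_2$ to get $x_1^{-2}$. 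Since here $\alpha\not<0$, this example in fact contradicts the ``only if'' direction of the lemma as stated: the paper transfers the summability criterion of the discrete case over $\Z_+^{V}$ to the integral over $\R_+^{V}$, but the two are not equivalent on the boundary of the parameter region, because the lattice sum diverges merely from the unit masses sitting on a coordinate axis while the corresponding continuum contribution is damped by the shrinking cross-section. In short, your argument is sound exactly where the statement is sound (the ``if'' direction, and the divergence when $\max(\alpha,\alpha+\beta\nu(G))>0$, where $\kappa>0$ gives an open set of expanding directions), and breaks down precisely on the critical set $\kappa=0$, where the lemma itself needs either extra hypotheses or a correction.
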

The integrability criterion in the lemma is the same as a criterion of existence of the stationary 
 distribution  of the countable CTMC and can be shown by adopting the
 proof of~ \cite[Lemma 4.13]{JVS}), so we skip the details.
  Note only that if $\alpha<0$ and $\alpha+\beta\nu(G)<0$,
   then the matrix $(-\alpha{\bf I}-\beta\A)$ is 
 positive definite.  Therefore, in this  case  the 
 function $e^{U(\bx)},\, \bx\in\R_{+}^V$ is an unnormalised density of a multivariate 
 normal distribution with the zero mean and the covariance matrix $(-\alpha{\bf I}-\beta\A)^{-1}$,
 which  implies the proof of the ``if'' statement.

\begin{theorem}
\label{conv}
Let $\mu^n$ be the stationary distribution of the CTMC $X^n$. 
If $N_n/\sqrt{n}\to\infty$ and  $N_n/n\to 0$, as $n\to \infty$, then the 
 sequence $(\mu^n,\, n\in \N)$ weakly converges to the probability measure
 $\mu$ which is absolutely continuous with the 
density $e^{U(\bx)}/Z_U$, $\bx\in\R_{+}^V$,   with respect  to the Lebesgue measure on $\R_{+}^V$.
\end{theorem}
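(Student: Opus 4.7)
The plan is to view $\mu^n$ as a discrete probability measure supported on the grid $S^n_{N,V}$ of mesh size $1/\sqrt n$, and to write, for every bounded continuous $f\colon\R_+^V\to\R$,
\begin{equation*}
\int_{\R_+^V}\!f\,d\mu^n\;=\;\frac{n^{-|V|/2}\sum_{\bx^n\in S^n_{N,V}} f(\bx^n)\,e^{W_n(\bx^n)}}{n^{-|V|/2}\sum_{\bx^n\in S^n_{N,V}} e^{W_n(\bx^n)}}.
\end{equation*}
Since the cell volume of this grid is $n^{-|V|/2}$, both numerator and denominator are Riemann sums for integrals over the growing box $R_n:=[0,N_n/\sqrt n]^V\uparrow\R_+^V$. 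If I can show that these sums converge to $\int_{\R_+^V} f(\bx)\,e^{U(\bx)}\,d\bx$ and $Z_U$ respectively, then $\int f\,d\mu^n\to Z_U^{-1}\int f\,e^U\,d\bx$, which is weak convergence $\mu^n\Rightarrow\mu$.

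The key analytic ingredient is a uniform Gaussian-type upper bound. By Lemma~\ref{integral}, the finiteness of $Z_U$ forces $\alpha<0$ and $\alpha+\beta\nu(G)<0$, so the symmetric matrix $\alpha\mathbf{I}+\beta\A$ is negative definite and $U(\bx)\le -c_0|\bx|^2$ on $\R_+^V$ for some $c_0>0$. Since $W_n(\bx)-U(\bx)=-\tfrac{\alpha}{2\sqrt n}\sum_v x_v$ is at most linear with coefficient of order $1/\sqrt n$, Young's inequality absorbs this term into a fraction of the quadratic $-c_0|\bx|^2$, yielding constants $c,C>0$ independent of $n$ with
\begin{equation*}
e^{W_n(\bx)}\;\le\; C\,e^{-c|\bx|^2}\quad\text{for all }\bx\in\R_+^V.
\end{equation*}

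The Riemann-sum convergence is then carried out by a truncation argument. Fix $R>0$ and set $K_R=[0,R]^V$. On $K_R$ one has $W_n\to U$ uniformly, since the discrepancy is of order $R/\sqrt n$; combined with continuity of $f$, this gives uniform convergence $f\,e^{W_n}\to f\,e^U$ on $K_R$, and standard Riemann-sum approximation yields
\begin{equation*}
n^{-|V|/2}\sum_{\bx^n\in S^n_{N,V}\cap K_R} f(\bx^n)\,e^{W_n(\bx^n)}\;\longrightarrow\;\int_{K_R}\!f(\bx)\,e^{U(\bx)}\,d\bx.
\end{equation*}
For the complementary region the uniform Gaussian bound controls both the discrete sum (viewed as a Riemann sum with cells of diameter of order $1/\sqrt n$) and the integral $\int_{\R_+^V\setminus K_R}|f|\,e^U d\bx$ by quantities that vanish as $R\to\infty$, uniformly in $n$. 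Applying this with $f\equiv 1$ identifies the denominator limit as $Z_U$, and with general bounded continuous $f$ it identifies the numerator limit; taking the ratio completes the proof. The principal obstacle is precisely this uniform tail control, which rests on the observation that $W_n-U$ is of strictly lower polynomial order than the dominant quadratic term of $U$, so the Gaussian decay of $e^{U}$ survives in $e^{W_n}$ with only a multiplicative constant independent of $n$.
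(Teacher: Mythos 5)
Your proof is correct and follows essentially the same route as the paper: both write $\int f\,d\mu^n$ as a ratio of Riemann sums on the mesh-$1/\sqrt{n}$ grid and pass to the limit, the only difference being that the paper dismisses the Riemann-sum convergence as ``easy to see'' while you supply the justification (a Gaussian domination of $e^{W_n}$ uniform in $n$, plus truncation). One small caveat, shared with the paper: the needed bound $U(\bx)\le -c_0|\bx|^2$ should be argued on the orthant $\R_+^V$ (where $(\A\bx,\bx)\le\nu(G)|\bx|^2$ always, and $(\A\bx,\bx)\ge 0$ when $\beta<0$) rather than via negative definiteness of $\alpha\mathbf{I}+\beta\A$ on all of $\R^V$, which can fail for $\beta<0$ on, e.g., bipartite graphs, even though $Z_U<\infty$.
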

\begin{proof}
Let $f:\R_{+}^V\to \R$ be a bounded continuous function. 
Then it is easy to see that  
$$\frac{1}{n^{|V|/2}}\sum_{\bx^n\in S^n_{N,V}}f(\bx^n)e^{W_n(\bx^n)}\to
\int_{\R_{+}^V}f(\bx)e^{U(\bx)}d\bx,\quad\text{as}\quad n\to \infty.$$
Consequently, 
$$\frac{1}{n^{|V|/2}}\sum_{\bx^n\in S^n_{N,V}}e^{W_n(\bx^n)}\to
Z_U, \quad\text{as}\quad n\to \infty,$$
where $Z_U$ is defined in~\eqref{Z_U}. 
Therefore,
$$\sum_{\bx^n\in S^n_{N,V}}f(\bx^n)\mu^n(\bx^n)=
\frac{\sum_{\bx^n\in S^n_{N,V}}f(\bx^n)e^{W_n(\bx^n)}}{
\sum_{\bx^n\in S^n_{N,V}}e^{W_n(\bx^n)}}\to
\int_{\R_{+}^V}\frac{f(\bx)e^{U(\bx)}}{Z_U}d\bx=
 \int_{\R_{+}^V}f(\bx)d\mu(\bx),$$
 as $n\to \infty$.
 The theorem is proved.
\end{proof}

\begin{remark}
{\rm 
It follows from~\cite[Section 7]{JVS}
 that  
the distribution~\eqref{measure} is also
 the stationary distribution of the  CTMC  $\tilde{Q}(t)\in S_{N, V}$ 
  with the transition rates
  \begin{equation*}
\tilde r(\bx,\by)=\begin{cases}
e^{\alpha x_v},& \mbox{ for }\by=\bx+\be_v \mbox{ and }\bx=(x_v,\, v\in V): x_v<N,\\
e^{-\beta(\A\bx)_v},& 
\mbox{ for } \by=\bx-\be_v \mbox{ and } \bx=(x_v,\, v\in V): x_v>0,\\
0,& \text{otherwise,}
\end{cases}
\end{equation*}
 i.e. the transition rates, where the interaction is built into the death rate.
 
 Further, an analogue of Theorem~\ref{the:1} holds  verbatim  for the process
 $\tilde{X}^n(t)=\tilde{Q}^n(nt)/\sqrt{n}$, where $\tilde{Q}^n(t)$ is the CTMC with transition rates
 $\tilde r_n(\bx, \by)=\tilde r(\bx/n,\by/n)$. 
}
\end{remark}

\section{Open problem}
\label{problem}

Suppose that the graph $G=(V, E)$ consists of a single vertex. 
Then the limit process $X$ in Theorem~\ref{the:1} is a strong solution of the reflecting SDE 
$$dX(t)=\alpha X(t)dt+\sqrt{2}dB(t)+d\phi(t),$$
where $B$ is now  a one-dimensional standard Brownian motion,  and $\phi$ 
is a nondecreasing continuous path 
process that increases only when $X(t)=0$. 
In other words, $X$ is a one-dimensional OU process 
reflected at the origin.
This process and its multidimensional versions naturally appear 
 in queueing (see, e.g. \cite{Yamada1995},~\cite{Glynn}, and references therein). 
 The long term behaviour  is well known (see, e.g.~\cite{Glynn}). 
Namely, if $\alpha<0$,  then  the process is positive recurrent with the  stationary 
density $\frac{\sqrt{2|\alpha|}}{\sqrt{\pi}}e^{\frac{\alpha}{2}x^2}$, $x\geq 0$ 
(see Section~\ref{stationary-dist}). 
If $\alpha=0$, then the process coincides with the scaled by $\sqrt{2}$  Brownian motion 
reflected at the origin, and, hence, is
 null recurrent.
Finally, if $\alpha>0$, then the process is transient.

The problem of interest is to establish the long term behaviour of the limiting  process 
in the general case of the underlying graph. If
 $|V|\geq 2$, i.e. the number of vertices is at least $2$,
 and $\beta=0$, then the structure of the graph is irrelevant, and the 
 limit process is a collection of independent one-dimensional OU processes reflected at the origin. 
If  $\beta\neq 0$, then the interaction 
can significantly affect the collective behaviour. 
A conjecture below concerns the  long term behaviour
of the limiting process in the case when $|V|\geq 2$. 
Recall that $\nu(G)$ denotes the principal eigenvalue of the graph.
\begin{conjecture}
1) If $\alpha<0$ and 
$\alpha+\beta\nu(G)<0$, then the process $X$ is positive recurrent with the stationary distribution $\mu$
 defined in Theorem~\ref{conv}.
2) If  $\alpha=0$, $\beta\leq 0$ and $|V|=2$, then the process $X$ is null recurrent.
3) If either $\alpha>0$, or $\alpha<0$ and $\alpha+\nu(G)\beta\geq 0$, then the process $X$ is transient.
\end{conjecture}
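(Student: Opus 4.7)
The plan is to treat the three parts of the conjecture separately, in each case exploiting the gradient structure of the drift $(\alpha\mathbf I+\beta\A)\bx=\nabla U(\bx)$ in~\eqref{eq:20'} and the explicit candidate stationary density $e^{U}/Z_U$ provided by Theorem~\ref{conv}.

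For part~1, the first step is to verify directly that the probability measure $\mu$ of Theorem~\ref{conv} is invariant for $X$. Because the drift is a gradient, the diffusion matrix is the constant $\sqrt 2\,\mathbf I$, and the reflection is normal on the convex polyhedron $\R_+^V$, $X$ is a reversible Langevin diffusion with potential $-U$; invariance of $\mu$ follows by applying the generator to smooth test functions and integrating by parts, the normal-reflection boundary condition $\partial_v f=0$ on $\{x_v=0\}$ precisely annihilating the boundary flux. Under the hypotheses $\alpha<0$ and $\alpha+\beta\nu(G)<0$, Lemma~\ref{integral} ensures $Z_U<\infty$, so $\mu$ is a genuine probability measure. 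The final step is to upgrade existence of a finite invariant measure to positive recurrence by a standard Harris argument: nondegeneracy of the noise and continuity of the drift yield the strong Feller property and local irreducibility on $\R_+^V$, and the Meyn--Tweedie machinery adapted to reflected diffusions (with the analytic substructure of~\cite{AnuLip}) then delivers positive Harris recurrence, identifying $\mu$ as the unique stationary law.

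For part~3 the approach is a Lyapunov argument in the Perron--Frobenius direction. Let $\bu>0$ denote the eigenvector of $\A$ corresponding to $\nu(G)$. For $f(\bx)=(\bu,\bx)$ the generator of $X$ in the interior satisfies $\mathcal L f(\bx)=(\alpha+\beta\nu(G))f(\bx)$, while the reflection contributes the nonnegative term $\sum_v u_v\,d\phi_v$. When $\alpha<0$ and $\alpha+\beta\nu(G)\ge0$ (which forces $\beta>0$), a Lyapunov function of the form $1-e^{-\theta f}$ with small $\theta>0$ is a supermartingale outside a compact set and, combined with optional stopping, yields $f(X(t))\to\infty$ almost surely, hence transience. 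When $\alpha>0$ and $\beta\ge0$, the pointwise comparison $dX_v\ge\alpha X_v\,dt+\sqrt 2\,dB_v+d\phi_v$ reduces a single coordinate to a transient one-dimensional reflected OU with positive mean-reversion coefficient. For part~2, the two-vertex case, the symmetric coordinate change $S=X_1+X_2$, $D=X_1-X_2$ decouples the noise into independent scalar Brownian motions; the $S$-equation becomes a reflected Brownian motion with linear drift $\beta S$ and null recurrence follows from classical one-dimensional reflected-diffusion theory, while the $D$ component is a perturbation handled by a separate bounded-variation argument.

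The main obstacle I foresee is the subcase $\alpha>0$, $\beta<0$ of part~3, where neither coordinatewise domination nor the Perron--Frobenius direction produces a monotone Lyapunov candidate: the $\bu$-directional drift coefficient $\alpha+\beta\nu(G)$ can be negative despite $\alpha>0$. A plausible workaround is to choose instead an eigenvector of $\alpha\mathbf I+\beta\A$ with positive eigenvalue and to work along its span, but such an eigenvector generically fails to lie in $\R_+^V$, so one must supplement the argument with careful control of the reflection contributions together with a controllability statement ensuring the process actually explores the region in which the chosen direction points outward. A parallel (smaller) difficulty on the positive-recurrence side is that passing from invariance of $\mu$ to genuine positive recurrence on the nonsmooth polyhedral domain $\R_+^V$ requires setting up the Harris machinery in a framework that accommodates reflection at the lower-dimensional faces, which is the technically most delicate input to close part~1.
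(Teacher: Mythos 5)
The statement you are proving is explicitly labelled a \emph{conjecture} and is posed in Section~\ref{problem} as an open problem: the paper contains no proof of it, so there is nothing to compare your argument against. Your text should therefore be judged as a standalone research plan, and as such it is a sensible one --- gradient drift, reversibility with respect to $e^{U}/Z_U$, and Lyapunov functions along the Perron--Frobenius direction are the natural tools --- but it is not a proof, and you yourself flag the two places where it is incomplete.

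Beyond the gaps you acknowledge, there are two concrete failures. First, in part~3 your exponential Lyapunov function $e^{-\theta(\bu,\bx)}$ only yields transience when $\alpha+\beta\nu(G)>0$ strictly: at the critical value $\alpha+\beta\nu(G)=0$ (included in the conjecture) the first-order term vanishes and the It\^o correction $2\theta^2\sum_v u_v^2\,e^{-\theta(\bu,\bx)}$ has the wrong sign, so the function is a submartingale rather than a supermartingale and the escape estimate breaks down; this critical case is exactly the analogue of the delicate non-explosive-transient regime $\beta=\beta_{cr}$ of the discrete chain and needs a separate argument. Second, in part~2 the claim that $S=X_1+X_2$ ``becomes a reflected Brownian motion with linear drift $\beta S$'' is incorrect: the pushing term $d\phi_1+d\phi_2$ acts whenever $X_1=0$ \emph{or} $X_2=0$, which can occur for arbitrarily large $S$, so $S$ is not a one-dimensional diffusion reflected at the origin and classical scalar recurrence criteria do not apply to it directly. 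Together with the unresolved subcase $\alpha>0$, $\beta<0$ of part~3 and the unverified Harris machinery on the polyhedral domain in part~1, these mean the conjecture remains open under your approach.
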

Note that 
the conjectured long term behaviour of the process $X$
resembles that of the Markov chain
$Q$ in the countable case $N=\infty$ (see Section~\ref{model}).
 In particular,  a similar  phase transition is expected in the case
 when $\alpha<0$.
 Specifically, given $\alpha<0$,  the  process $X$ is conjectured to be 
 positive recurrent, or transient depending on whether 
  $\beta<\beta_{cr}$, or  $\beta\geq \beta_{cr}$ respectively (where $\beta_{cr}$ is defined in~\eqref{beta}).
  The main difference 
 is that the discrete process is explosive transient when $\beta>\beta_{cr}$ and non-explosive transient 
 only when $\beta=\beta_{cr}$.  In contrast, the continuous process $X$  cannot be explosive due to linearity of
 equation~\eqref{eq:20'}.

\end{document}